\newtheorem{theorem}{Theorem}
\begin{document}

\title{Pick-up Sticks and the Fibonacci Factorial}

\author[Sudbury]{Aidan Sudbury}
\address{School of Mathematics, Monash University, Australia}
\email{aidan.sudbury@monash.edu.au}

\author[Sun]{Arthur Sun}
\address{Department of Computer Science and Technology, University of Cambridge, UK}
\email{aws53@cam.ac.uk}

\author[Treeby]{David Treeby}
\address{School of Mathematics, Monash University, Australia}
\email{david.treeby@monash.edu.au}

\author[Wang]{Edward Wang}
\address{School of Mathematics and Statistics, University of Melbourne, Australia}
\email{edward.wang@student.unimelb.edu.au}

\begin{abstract}
  We present a variation of the broken stick problem in which $n$ stick lengths are sampled uniformly at random. We prove that the probability that no three sticks can form a triangle is the reciprocal of the product of the first $n$ Fibonacci numbers. Extensions to quadrilaterals and general $k$-gons are also discussed.
\end{abstract}

\maketitle

\section{Introduction}

Random stick problems have long revealed unexpected structure in probability and geometry. Here we study a variant in which $n$ stick lengths are chosen independently and uniformly from $[0,1]$, and we ask for the probability that no three of them can form the sides of a triangle. Surprisingly, this probability is given by \[
  P_n = \frac{1}{F_1 F_2 \cdots F_n},
\] the reciprocal of the product of the first $n$ Fibonacci numbers.

The same recursive method that yields this result extends naturally to higher polygons: forbidding quadrilaterals leads to the Tribonacci sequence, and in general, one can explore analogous relations for $k$-gons. The following sections trace its connection to classical broken-stick problems, develop the proof, and discuss these extensions.

\section{Picking up (not breaking) sticks}

The \emph{broken stick problem} asks: when a stick is broken at two random points, what is the probability that the three resulting segments can form a triangle?
This problem dates back at least to Lemoine~\cite{lemoine} in 1873, and is now considered a classic in geometric probability. Further popularized by Martin Gardner~\cite{gardner} in 1959, the problem has since inspired numerous generalizations. 

Andrea and G\'{o}mez~\cite{andrea} extended the problem, determining the probability that a stick broken into $n$ pieces can form an $n$-gon, while Verreault~\cite{verreault2022a, verreault2022b} and Mukerjee~\cite{mukerjee2024} have provided the most complete modern treatments, deriving the probability that a stick broken into $n$ pieces can form a $k$-gon. Verreault's earlier paper \cite{verreault2022a} developed an analytic approach to the ``for all'' version of the $k$-gon condition using order statistics and exponential spacings, whereas his later work addressed the ``there exists'' version combinatorially through MacMahon partition analysis. Mukerjee subsequently rederived the latter result using a shorter, calculus-based proof. Together, these studies established both the combinatorial and analytic frameworks for the broken-stick family of problems.

Here, we investigate the related \emph{pick-up stick problem}. It is worth clearly distinguishing this from the broken stick model. Unlike previously, we now seek the probability that no triangle is formed from $n$ sticks, each being independently sampled from the uniform distribution on $[0, 1]$.

In the classical setting the stick lengths are \emph{dependent} because they must sum to~1: their joint law is $\operatorname{Dirichlet}(1,\dots,1)$. By contrast, in the pick-up stick or \emph{stick-sampling} model, all stick lengths are independent. This changes the nature of the problem substantially: the segments are no longer constrained to sum to~1, and their lengths are statistically independent.

The two problems are intrinsically linked. Let $U_1,\dots,U_n$ be independent $\operatorname{Unif}[0,1]$ variables and let $S=\sum_{i=1}^n U_i$. Conditional on $S=s\le 1$, the normalized vector $(U_1/s,\dots,U_n/s)$ is uniformly distributed on the simplex $\{x\in[0,1]^n:\sum_i x_i=1\}$ and hence has the $\mathrm{Dirichlet}(1,\dots,1)$ law. This is identical to that of the broken stick problem.

When $s>1$, the bounds $U_i\le 1$ imply $U_i/s\le 1/s$, so $(U_1/s,\dots,U_n/s)$ is uniform on the \emph{truncated} simplex $\{x\in[0,1]^n:\sum_i x_i=1,\ x_i\le 1/s\}$, i.e.\ a truncation of $\operatorname{Dirichlet}(1,\dots,1)$. 

Unconditionally, since $S$ has the Irwin--Hall distribution, the law of $(U_1/s,\dots,U_n/s)$ is a mixture over $s$, which yields probabilities that differ from those in the classical broken-stick model.

In this pick-up stick model, Petersen and Tenner~\cite{petersen} considered the probability that $n$ such sticks can form an $n$-gon, showing that this occurs with probability $1-\frac{1}{(n-1)!}$. Equivalently, the probability that the $n$ sticks \emph{cannot} form an $n$-gon is $\frac{1}{(n-1)!}$.

In this paper we examine a novel variation within the pick-up stick model: what is the probability that \emph{no three} of the $n$ sampled sticks can form a triangle? The surprising result is that this probability is given by
\[
  \frac{1}{F_1F_2\cdots F_n},
\]
where $F_n$ denotes the $n$th Fibonacci number. The product $F_1 F_2 \cdots F_n$ is known as the $n$th \emph{Fibonorial}, whose appearance here is striking. This contrasts with the factorials in Petersen and Tenner's result; the connection between Fibonacci numbers and a natural geometric probability appears to be previously unrecognized.

Later, we extend our method to compute the probability that no four sampled sticks can form a quadrilateral, and discuss generalizations to configurations avoiding any $k$-gon.

\section{Main result}

We now turn to the proof of the result. Our approach uses the joint distribution of the order statistics of the independent stick lengths, analyzed through exponential spacings. Although this technique has appeared in broken-stick analyses by Verreault~\cite{verreault2022a} and Mukerjee~\cite{mukerjee2024}, its role here is different due to the absence of the fixed-sum constraint on the lengths. We state the result formally below.

\begin{theorem}\label{thm1}
  If \(n\) real numbers are chosen uniformly from the unit interval \([0, 1]\), the probability that no three of the numbers can form a triangle is \[
    \prod_{i = 1}^{n} \frac{1}{F_i}
  ,\] where \(F_n\) are the Fibonacci numbers defined by \(F_1 = 1\), \(F_2 = 1\), and \(F_{n} = F_{n-1} + F_{n-2}\) for \(n > 2\).
\end{theorem}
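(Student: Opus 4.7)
The plan is to reduce the ``no triangle'' condition to a chain of Fibonacci-type inequalities on the order statistics, apply the exponential-spacings representation of uniform order statistics, and then use a triangular change of variables to make the probability factorise into a product of one-dimensional exponential integrals.

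I would first sort the samples as $U_{(1)} \le U_{(2)} \le \cdots \le U_{(n)}$ and observe that no three can form a triangle precisely when $U_{(k)} \ge U_{(k-1)} + U_{(k-2)}$ for every $k \in \{3,\ldots,n\}$. The forward direction follows by applying the failure of the triangle inequality to each consecutive triple; the reverse follows from $U_{(i)} + U_{(j)} \le U_{(k-2)} + U_{(k-1)} \le U_{(k)}$ for any $i<j<k$. Next, using the standard representation $U_{(k)} = S_k/S_{n+1}$, where $S_k = E_1 + \cdots + E_k$ and the $E_i$ are i.i.d.\ $\operatorname{Exp}(1)$, the condition rescales to the scale-free inequality $E_k \ge S_{k-2}$ for $3 \le k \le n$, in which $S_{n+1}$ no longer appears. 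Hence
\[
  P_n = \int_{\{E_k \ge S_{k-2},\ 3 \le k \le n\}} \exp\!\left(-\sum_{i=1}^{n} E_i\right) dE_1 \cdots dE_n.
\]

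To evaluate the integral I would substitute $T_1 = E_1$, $T_2 = E_2$, and $T_k = E_k - S_{k-2}$ for $k \ge 3$. This triangular substitution has unit Jacobian and maps the domain bijectively onto the positive orthant $\{T_k \ge 0\}$. From the recursion $S_n = S_{n-1} + S_{n-2} + T_n$ for $n \ge 3$, a short induction yields the coefficient identity
\[
  \sum_{i=1}^{n} E_i = \sum_{k=1}^{n} F_{n-k+1}\, T_k,
\]
which makes the integrand separate. The integral then collapses to $\prod_{k=1}^{n} \int_0^\infty e^{-F_{n-k+1} T_k}\,dT_k = \prod_{k=1}^{n} F_k^{-1}$, as desired.

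The step I expect to require the most care is the Fibonacci coefficient identity: each later $E_k$ becomes a linear combination of many earlier $T_j$'s, and one must verify that those weights compound into exactly the Fibonacci numbers via the recursion above. Once that identity is established, the order-statistic reduction, the passage to exponential spacings, and the final factorisation are all routine.
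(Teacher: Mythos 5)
Your proposal is correct, and it follows the paper's framework for the first two stages: the reduction of the no-triangle condition to the consecutive-triple inequalities $U_{(k)} \ge U_{(k-1)} + U_{(k-2)}$, and the passage to exponential spacings yielding the scale-free condition $E_k \ge S_{k-2}$ for $3 \le k \le n$, are exactly the steps in the published argument. Where you genuinely diverge is in evaluating the resulting integral. The paper integrates iteratively from $x_n$ down to $x_1$ and proves by induction that after $i$ integrations the partial result has the form $\frac{1}{F_1\cdots F_i}e^{-(F_{i+2}(x_1+\dots+x_{n-i-1})+F_{i+1}x_{n-i})}$, so the Fibonacci recursion is tracked through the exponents step by step. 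You instead make a single triangular substitution $T_k = E_k - S_{k-2}$ (unit Jacobian, mapping the constraint region onto the positive orthant) and prove the one identity $\sum_i E_i = S_n = \sum_k F_{n-k+1}T_k$, after which the integral factors as $\prod_k \int_0^\infty e^{-F_{n-k+1}T_k}\,dT_k = \prod_k F_k^{-1}$. Both computations rest on the same recursion $S_k = S_{k-1}+S_{k-2}+T_k$, but your version isolates the Fibonacci structure in a single clean linear-algebra identity and replaces the inductive bookkeeping of nested integrals with a product of independent one-dimensional integrals; it is arguably the more transparent route, and it makes explicit \emph{why} the answer is a product, namely that the $T_k$ are independent exponentials with rates $F_{n-k+1}$. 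One small point worth spelling out if you write this up: the inverse map sends the orthant $\{T_k \ge 0\}$ back into $\{E_k > 0\}$ because $S_{k-2} \ge E_1 > 0$ for $k \ge 3$, so the bijection claim is indeed safe.
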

\begin{proof}
  Let $U_1, U_2, \dots, U_n$ be independent and identically distributed uniform random variables on $[0, 1]$, each representing the length of a stick. Their \emph{order statistics}, denoted $U_{(1)}, U_{(2)}, \dots, U_{(n)}$, are obtained by sorting them into an increasing sequence, such that \[
    U_{(1)} \le U_{(2)} \le \dots \le U_{(n)}
  .\] 
  The requirement that no three lengths form a triangle means that
  \[
    U_{(a)} + U_{(b)} \le U_{(c)}
  \]
  for all indices $a < b < c$. Because the ordered lengths satisfy $U_{(1)} \le U_{(2)} \le \cdots \le U_{(n)}$, it suffices to check this condition only for consecutive triples:
  \begin{equation}\label{eq1}
    U_{(i)} + U_{(i+1)} \le U_{(i+2)},
  \end{equation}
  for all $1 \le i \le n-2$.

  It is well known that the normalized cumulative sums of exponential variables follow the same distribution as the order statistics of a $\operatorname{Uniform}(0, 1)$ sample~\cite{johnson}. In other words, we may write \[
    U_{(1)} = \frac{X_1}{S}, \quad U_{(2)} = \frac{X_1 + X_2}{S}, \quad \dots , \quad U_{(n)} = \frac{X_1 + \dots + X_n}{S},
  \] where $S = X_1 + \dots + X_{n+1}$ for independent and identically distributed exponential variables $X_i \sim \operatorname{Exp}(1)$, each with density $f(x) = e^{-x}$.

  Substituting this into \eqref{eq1} yields 
  \begin{align*}
    \frac{X_1 + \dots + X_i}{S} + \frac{X_1 + \dots + X_{i+1}}{S} &\le \frac{X_1 + \dots + X_{i+2}}{S} \\
    X_1 + \dots + X_{i} &\le X_{i+2},
  \end{align*}
  where $1\le i\le n-2$. This amounts to ensuring all of 
  \begin{align*}
    X_1 &\le X_3 \\
    X_1 + X_2 &\le X_4 \\
              &\vdotswithin{\le} \\
    X_1 + \dots + X_{n-2} &\le X_n,
  \end{align*}
  hold simultaneously, with $X_1$ and $X_2$ being unbounded between $0$ and infinity. 

  The successive inequalities define a nested region in $\mathbb{R}^n$, and the required probability corresponds to the volume of this region under the exponential density.
  To evaluate it, we integrate successively from $X_n$ down to $X_1$, each step producing a factor determined by the recurrence relation that will lead to the Fibonacci pattern.

  Denoting $P_n$ as the probability that no triangle can be formed from $n$ sticks, repeatedly integrating $X_i$ while satisfying the above inequalities yields
  \begin{align*}
    P_n &= \int_{0}^{\infty} e^{-x_1} \int_{0}^{\infty} e^{-x_2} \int_{x_1}^{\infty} \dotsi \int_{x_1 + \dots + x_{n-2}}^{\infty} e^{-x_n} \, dx_n \cdots dx_{3}\, dx_2\, dx_1 \\
        &= \int_{0}^{\infty} \int_{0}^{\infty}\int_{x_1}^{\infty} \dotsi \int_{x_1 + \dots + x_{n-2}}^{\infty} e^{-(x_1 + \dots + x_n)} \, dx_n \cdots dx_{3}\, dx_2\, dx_1.
  \end{align*}
  The innermost integral evaluates to
  \begin{equation}\label{eq2}
    \int_{x_1 + \dots + x_{n-2}}^{\infty} e^{-(x_1 + \dots + x_n)}\, dx_n = e^{-(2(x_1 + \dots + x_{n-2}) + x_{n-1})} 
    .
  \end{equation}
  Each integration introduces coefficients that propagate forward according to the same recurrence as the Fibonacci sequence. To formalize this, let $I_i$ denote the integral obtained after $i$ integrations. For example, \eqref{eq2} corresponds to $I_1$. We can prove inductively that \[
    I_{i} = \frac{1}{F_{1} \cdots F_i}e^{-(F_{i+2}(x_1 + \dots + x_{n-i-1}) + F_{i+1}x_{n-i})}.
  \] The base case $i=1$ obviously holds, and the induction is completed with the fact that 
  \begin{align*}
    I_{i+1} &= \frac{1}{\prod_{k=1}^{i}}\int_{x_1 + \dots + x_{n-i-2}}^{\infty} I_{i} \, dx_{n-i} \\
            &= \frac{1}{\prod_{k = 1}^{i+1}} e^{-((F_{i+2}+F_{i+1})(x_1 + \dots + x_{n-i-2}) + F_{i+2}x_{n-i-1})}.
  \end{align*}
  We thus obtain
  \begin{align*}
    P_n &= \int_{0}^{\infty} \int_{0}^{\infty} I_{n-2}\, dx_2 \, dx_1 \\
        &= \int_{0}^{\infty} \int_{0}^{\infty} \frac{1}{F_1 \cdots F_{n - 2}} e^{-(F_{n}x_1 + F_{n-1} x_{2})} \, dx_2\, dx_1 \\
        &= \int_{0}^{\infty} \frac{1}{F_1 \cdots F_{n-1}} e^{-(F_nx_1)}\, dx_1 \\
        &= \frac{1}{F_1 \cdots F_n},
  \end{align*}
  which is the desired result. 
\end{proof}

While the algebraic structure of the integrals in the above proof resembles that used in the broken-stick analyses of Verreault and Mukerjee, the mechanism here is quite different: independence of lengths replaces the Dirichlet constraint, and this change transforms the factorial recursions of the classical model into the Fibonacci recursion observed above. The clean appearance of the Fibonorial therefore arises naturally from the geometry of the independent sample.

\section{Quadrilaterals and the Tribonacci extension}

The recursive method used above extends naturally to higher polygons. For quadrilaterals, the condition that no four of the sampled sticks can form a closed figure translates into a system of inequalities analogous to the triangle case, but involving sums of three lengths. The same exponential-spacing machinery applies, and the resulting recursion involves one additional predecessor term: instead of the two-term Fibonacci relation, the probability sequence now satisfies a three-term \emph{Tribonacci} relation.

\begin{theorem}
  If \(n\) real numbers are chosen uniformly from the unit interval \([0, 1]\), the probability that no four of the numbers can form a quadrilateral is \[
    \frac{1}{T_n - T_{n-2}} \prod_{i = 1}^{n-1} \frac{1}{T_i}
  ,\] where \(T_k\) are the \emph{Tribonacci} numbers defined by \(T_1 = 1\), \(T_2 = 1\), \(T_3 = 2\), \(T_{k} = T_{k-1} + T_{k-2} + T_{k-3}\) for \(k > 3\).
\end{theorem}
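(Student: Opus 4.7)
The strategy mirrors the proof of Theorem~\ref{thm1}, replacing the triangle inequality by its quadrilateral analogue. I would first observe that since the sorted sticks satisfy $U_{(a)}\le U_{(b)}\le U_{(c)}\le U_{(d)}$, the quadrilateral-forbidding condition $U_{(a)}+U_{(b)}+U_{(c)}\le U_{(d)}$ for all $a<b<c<d$ reduces to checking consecutive quadruples
\[
  U_{(i)} + U_{(i+1)} + U_{(i+2)} \le U_{(i+3)}, \qquad 1 \le i \le n-3.
\]
Applying the same exponential-spacing representation $U_{(k)} = (X_1 + \cdots + X_k)/S$ with $X_i \sim \operatorname{Exp}(1)$ and cancelling $S$ rewrites each constraint as
\[
  X_{i+3} \ge 2(X_1 + \cdots + X_i) + X_{i+1},
\]
so that $X_1, X_2, X_3$ remain unconstrained while each later $X_{i+3}$ is bounded below by a linear function of the earlier variables.

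Next I would integrate the joint exponential density $e^{-(x_1+\cdots+x_n)}$ over this region in the order $X_n, X_{n-1}, \ldots, X_4$, tracking the exponent via three grouped quantities rather than two. The natural inductive ansatz is
\[
  I_k = \frac{1}{T_1 T_2 \cdots T_k}\,\exp\!\bigl(-(T_{k+3}-T_{k+1})(x_1+\cdots+x_{n-k-2}) - T_{k+2}\,x_{n-k-1} - T_{k+1}\,x_{n-k}\bigr),
\]
in which the two most recent variables are separated from the ``bulk'' sum $x_1+\cdots+x_{n-k-2}$. The inductive step integrates $X_{n-k}$ from its lower bound $2(x_1+\cdots+x_{n-k-3})+x_{n-k-2}$; its coefficient $T_{k+1}$ in $I_k$ produces the prefactor $1/T_{k+1}$, and one then rewrites the bulk sum as $(x_1+\cdots+x_{n-k-3})+x_{n-k-2}$ to regroup. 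A direct check using the Tribonacci recurrence $T_{k+4} = T_{k+3} + T_{k+2} + T_{k+1}$ shows that the updated coefficients $(a_k+2c_k,\,a_k+c_k,\,b_k)$ are exactly $(T_{k+4}-T_{k+2},\,T_{k+3},\,T_{k+2})$, advancing the induction.

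Finally, after $n-3$ constrained integrations only $X_1, X_2, X_3$ remain, and the exponent collapses to $(T_n - T_{n-2})x_1 + T_{n-1}x_2 + T_{n-2}x_3$. Integrating freely over $[0,\infty)^3$ contributes the factor $\bigl((T_n - T_{n-2})\,T_{n-1}\,T_{n-2}\bigr)^{-1}$, which combined with the accumulated prefactor $(T_1\cdots T_{n-3})^{-1}$ gives the stated answer. The chief difficulty, I expect, is diagnosing the correct inductive shape of $I_k$: unlike in the Fibonacci case, the bulk coefficient is $T_{k+3}-T_{k+1}$ rather than a single Tribonacci number, and this asymmetry — forced by the extra $+X_{i+1}$ term in the constraint — is precisely what produces the anomalous factor $T_n - T_{n-2}$ in the final product.
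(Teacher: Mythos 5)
Your proposal is correct and follows essentially the same route as the paper: reduction to consecutive quadruples, exponential spacings, and iterated integration from $X_n$ down with an inductive ansatz for the exponent. The only difference is presentational — the paper tracks the three exponent coefficients via auxiliary sequences $R_k, S_k, T_k$ and only at the end identifies $R_{n-2}=T_n-T_{n-2}$ and $S_{n-2}=T_{n-1}$, whereas you write the ansatz directly as $(T_{k+3}-T_{k+1},\,T_{k+2},\,T_{k+1})$ from the start, which is the same computation in closed form.
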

\begin{proof}
  Like before, the inequality can be reduced to simultaneously satisfying \[
    U_{(i)} + U_{(i+1)} + U_{(i+2)} \le U_{(i+3)}
  \] for all $1 \le i \le n-3$.

  We use the same method of rewriting the order statistics as sums of exponential variables, getting the inequality
  \begin{align*}
    \frac{X_1 + \dots + X_{i}}{S} + \frac{X_1 + \dots + X_{i+1}}{S} + \frac{X_1 + \dots + X_{i+2}}{S} &\le \frac{X_1 + \dots + X_{i+3}}{S} \\
    2(X_1 + \dots + X_i) + X_{i+1} &\le X_{i+3}
  \end{align*}
  which needs to be satisfied for all $i$, leaving $X_1, X_2$ and $X_3$ unbounded.

  Again, we integrate over all the variables to obtain the required probability, which we denote $P_n$. This gives us
  \begin{align*}
    P_n &= \int_{0}^{\infty} e^{-x_1}\int_{0}^{\infty} \dotsi \int_{2(x_1 + \dots + x_{n-3}) + x_{n-2}}^{\infty} e^{-x_n}\, dx_n\, \cdots\, dx_2\, dx_1, \\
        &= \int_{0}^{\infty} \int_{0}^{\infty} \dotsi \int_{2(x_1 + \dots + x_{n-3}) + x_{n-2}}^{\infty} e^{-(x_1 + \dots + x_n)}\, dx_n \, \cdots \, dx_2 \, dx_1.
  \end{align*}

  Let the integral obtained after $i$ integrations be denoted $I_i$. For example, we have \[
    I_1 = \int_{2(x_1+\dots+x_{n-3})+x_{n-2}}^{\infty} e^{-(x_1+\dots+x_n)}\, dx_n = e^{-(3(x_1+\dots+x_{n-3})+2x_{n-2}+x_{n-1})}
  .\]

  We now introduce three sequences, $(R_k)$, $(S_k)$ and $(T_k)$, to track the coefficients of the terms in the exponent during the inductive process of successive integrations. First, let $R_2=3$, $S_2=2$, $T_2=1$ so that the expression for $I_1$ above agrees with the $k=2$ case. We now derive recurrence relations from the coefficient updates in the exponent after each integration:
  \begin{align*}
    R_{k} &= R_{k-1} + 2T_{k-1}, \\
    S_k &= R_{k-1} + T_{k-1}, \\
    T_{k} &= S_{k-1}.
  \end{align*}
  This is seen inductively---the first integral $I_1$ evaluates to \[
    I_1 = e^{-(R_2(x_1 + \dots + x_{n-3}) + S_2 x_{n-2} + T_2 x_{n-1})},
  \] and we then assume that $I_i$ evaluates to
  \begin{equation} \label{eq3}
    I_i = \frac{1}{\prod_{k=1}^{i} T_k} e^{-(R_{i+1}(x_1 + \dots + x_{n-2-i}) + S_{i+1} x_{n-1-i} + T_{i+1}x_{n-i})}.
  \end{equation}
  From this inductive hypothesis, we obtain
  \begin{align*}
    I_{i+1} &= \int_{2(x_1+\dots+x_{n-3-i}) + x_{n-2-i}}^{\infty} I_i\, dx_{n-i} \\
            &= \frac{1}{\prod_{k=1}^{i+1} T_k} e^{-((R_{i+1} + 2T_{i+1})(x_1 + \dots + x_{n-3-i}) + (R_{i+1} + T_{i+1})x_{n-2-i} + S_{i+1}x_{n-1-i})},
  \end{align*}
  which closes the induction, showing that the recurrence relations hold. It remains to verify that $T_k$ indeed satisfies the Tribonacci recursion. Subtracting $S_k$ from $R_k$ gives us $R_k - S_k = T_{k-1}$, so $R_k = S_k + T_{k-1}$. From this, substitutions yield 
  \begin{align*}
    T_k = S_{k-1} &= R_{k-2} + T_{k-2} \\
                  &= (S_{k-2} + T_{k-3}) + T_{k-2} \\
                  &= T_{k-1} + T_{k-3} + T_{k-2},
  \end{align*}
  so our $T_k$ are exactly the Tribonacci numbers. Now, using \eqref{eq3}, we have \[
    I_{n-3} = \frac{1}{\prod_{k = 1}^{n-3} T_k} e^{-(R_{n-2}x_1 + S_{n-2}x_2 + T_{n-2}x_3)}
  ,\] leaving us with
  \begin{equation*}
    P_n = \int_{0}^{\infty} \int_{0}^{\infty} \int_{0}^{\infty} \frac{1}{\prod_{k = 1}^{n-3} T_k} e^{-(R_{n-2}x_1 + S_{n-2}x_2 + T_{n-2}x_3)} \, dx_3\, dx_2\, dx_1.
  \end{equation*}
  Evaluating this integral yields \[
    P_n = \frac{1}{T_1 \cdots T_{n-2} S_{n-2} R_{n-2}}
    .\] From the recursive definitions of $R$ and $S$ previously, we have that $S_{n-2} = T_{n-1}$, and that $R_{n-2} = T_{n} - T_{n-2}$, ultimately giving us \[
    P_n = \frac{1}{(T_n - T_{n-2})T_1\cdots T_{n-1}}
  ,\] as desired.
\end{proof}
Observe the presence of a ``correction term'' in the form of $T_n - T_{n-2}$. This disrupts an otherwise perfect generalization of Theorem~\ref{thm1}.

\section{An open problem}

Our results for triangles and quadrilaterals suggest a deeper underlying structure in the probability that no subset of $n$ independently sampled stick lengths can form a $k$-gon. While the triangular case leads cleanly to the Fibonacci law, the quadrilateral case leads to the Tribonacci sequence but requires a small correction term. Both results hint at a general pattern in which the avoidance of $k$-gons corresponds to a $(k-1)$-step Fibonacci-type recurrence.

Similar correction terms appear in the broken-stick analyses of Verreault and Mukerjee for $k \ge 4$, where analogous recursions produce generalized Fibonacci numbers. We invite the reader to extend this framework and find closed-form expressions for higher values of $k$:
\begin{quote}
  \emph{Given $n$ independent real numbers chosen uniformly from $[0,1]$, what is the probability that no $k$ of them can be arranged to form the side lengths of a $k$-gon?}
\end{quote}

Moreover, the appearance of the Fibonacci numbers raises a natural question: is there a way to derive the main result of Theorem~\ref{thm1} using a purely combinatorial argument? A direct counting argument yielding the same Fibonorial law would shed further light on the probabilistic structure uncovered here. We encourage the reader to search for such a proof.

\bibliographystyle{amsplain}
\bibliography{references.bib}

\end{document}